\newcommand{\ep}{\varepsilon}
\newcommand{\N}{\mathbb{N}}
\newcommand{\R}{\mathbb{R}}
\newcommand\restr[2]{{
  \left.\kern-\nulldelimiterspace 
  #1 
  \vphantom{\big|} 
  \right|_{#2} 
  }}
\DeclareMathOperator{\co}{co}
\newtheorem{theorem}{Theorem}[section]
\newtheorem{lemma}[theorem]{Lemma}
\newtheorem{prop}[theorem]{Proposition}
\newtheorem{corollary}[theorem]{Corollary}
\theoremstyle{definition}
\newtheorem{definition}[theorem]{Definition}
\theoremstyle{remark}
\newtheorem{remark}[theorem]{Remark}
\numberwithin{equation}{section}
\def\fnote#1{\footnote}
\def\R{{\mathbb R}}
\def\ignora#1{}
\def\n3#1{\left\vert  \! \left\vert \! \left\vert \, #1 \, \right\vert \!
  \right\vert \! \right\vert }
\begin{document}

\keywords{Lipschitz retractions; approximation properties}

\subjclass[2020]{46B20; 46B80; 54C55}

\title[Compact retracts]{Retractions and the bounded approximation property in Banach spaces}

\author{Petr H\'ajek}\thanks{This research was supported by CAAS CZ.02.1.01/0.0/0.0/16-019/0000778
 and by the project  SGS21/056/OHK3/1T/13.}
\address[P. H\'ajek]{Czech Technical University in Prague, Faculty of Electrical Engineering.
Department of Mathematics, Technická 2, 166 27 Praha 6 (Czech Republic)}
\email{ hajek@math.cas.cz}

\author{ Rub\'en Medina}\thanks{The second author research has also been supported by MICINN (Spain) Project PGC2018-093794-B-I00 and MIU (Spain) FPU19/04085 Grant.}
\address[R. Medina]{Universidad de Granada, Facultad de Ciencias.
Departamento de An\'{a}lisis Matem\'{a}tico, 18071-Granada
(Spain); and Czech technical University in Prague, Faculty of Electrical Engineering.
Department of Mathematics, Technická 2, 166 27 Praha 6 (Czech Republic)}
\email{rubenmedina@ugr.es}
\urladdr{\url{https://www.ugr.es/personal/ae3750ed9865e58ab7ad9e11e37f72f4}}

\maketitle 

\begin{abstract}
In the present paper we prove that  a necessary condition for a Banach space $X$ to admit a generating compact Lipschitz retract $K$, which satisfies an additional mild assumption on its shape, is that $X$ enjoys the Bounded Approximation Property. This is a partial solution to a question raised by Godefroy and Ozawa.
\end{abstract}

\section{Introduction}

 In this note, we focus on the problem of the existence of Lipschitz retractions onto  compact and generating subsets of a Banach space $X$. A subset $C$ of a Banach space $X$ is called generating if the closed linear span of $C$ is the whole  space $X$.

In \cite{GO14}  (and subsequently in  \cite{God15}, \cite{God215}, \cite{God20},  and \cite{GP19})
Godefroy and Ozawa asked whether every separable Banach space $X$ admits a generating convex and compact Lipschitz retract (GCCR for short) $K$. The main result of our paper (Theorem \ref{theosmall}) hints towards a negative answer, since we prove that the existence of a generating compact Lipschitz retract  $K$ (not necessarily convex) satisfying some additional (not exceedingly restrictive)  assumptions  implies that $X$ has the Bounded Approximation Property. 
If we moreover assume that $K$ is convex, the additional assumptions take on a simpler form
(Corollary \ref{cor}).

Of course, thanks to Enflo we know that BAP is not true for every separable Banach space \cite{Enf73}.

Our argument is a variant of that in \cite{HM21}, where a much stronger flatness assumption  on $K$ was used to imply the $\pi$-property of $X$. Roughly speaking, the flatness assumption is a quantitative 
condition on the sequence of distances  of the given compact set $K$ to its finite-dimensional sections. The compactness condition itself implies that this sequence tends to zero. In our result, we assume that it  is controlled by a geometric sequence.
This hints to the possibility of the existence of a  counterexample to the question of Godefroy and Ozawa.

It should be noted that in \cite{HM21} we construct a GCCR  in every Banach space with an FDD, but their
existence in every Banach space with the weaker property BAP is not clear to us. 

The proof of the main Theorem \ref{theosmall} is based on several deep ingredients and follows the same scheme developed in Section 4 of \cite{HM21}. We produce the finite rank operators approximating  the identity by means of the Lipschitz retraction in a non-constructive way, only proving their existence. This is done in several steps. First, we deal with coarse Lipschitz maps given by the composition of the retraction with compressions onto the n-dimensional sections of the space (given by the nearest point map, not necessarily unique). Since this maps may not be Lipschitz, we need to ''Lipschitzize'' them making use of a technique by Bourgain \cite{Bou87} (streamlined later by Begun \cite{B99}). This technique works only in the finite dimensional setting so the crucial part here is to prove that we can restrict each map to some finite dimensional subspace whose dimension is under control. To do so we present a generalization of Lemma 4.2 in \cite{HM21} originally due to Vitali Milman. Finally, once we have the Lipschitz maps, it is a matter of using the differentiation theory and averaging in the finte dimensional setting to obtain the uniformly bounded finite rank operators pointwise converging to the identity.

Let us now review the basic definitions that will be used in the next section.

\begin{definition}
Let $X$ be a Banach space. If there is a uniformly bounded net of finite rank operators $(T_\alpha)$ on $X$ tending strongly to the identity on $X$, then we say that $X$ has the bounded approximation property (BAP for short). If $\lambda\ge1$ is a uniform bound for the net then we say that $X$ has the $\lambda$-bounded approximation property ($\lambda$-BAP for short).

In the separable case we may assume that the net is actually a sequence.
\end{definition}

\begin{definition}
A map $T$ from a metric space $M$ into another metric space $N$ is said to be Lipschitz if there exists some $\lambda>0$ such that
$$d(T(x),T(y))\le \lambda d(x,y) \;\;\;\;\forall x,y\in M.$$
We say that $\lambda$ is the Lipschitz constant for $T$ and we call the infimum of all Lipschitz constants for $T$ the Lipschitz norm of $T$, that is,
$$||T||_{Lip}=\inf\big\{\lambda>0,\text{ Lipschitz constant for }T\big\}=\sup\limits_{x,y\in M, x\ne y}\frac{d\big(T(x),T(y)\big)}{d(x,y)}.$$
If $\lambda>0$ is a Lipschitz constant for $T$ then we say that $T$ is $\lambda$-Lipschitz.
\end{definition}

A retraction from a metric space $M$ onto its subset $N$ is a map $R:M\to N$ whose restriction to $N$ is the identity on $N$.

For the background in nonlinear analysis we refer the reader to the authoritative monograph \cite{BL2000}. For the remaining general concepts and results of Banach space theory we refer to \cite{Fab1} and the background on the approximation properties can be found in \cite{Cas01}.

\section{Main result}

In this section we introduce the key concept of flatness for a compact set  and prove the main result of this paper.
 Namely, every Banach space with a Lipschitz retraction onto a flat and generating set with nonempty local interior has the Bounded Approximation Property (BAP for short). To do so, we follow the ideas in \cite{HM21} Section 4. In particular we need to generalize Lemma 4.2 of \cite{HM21}, a crucial result about projection constants due to Vitali Milman, to the setting of BAP type operators.

For that purpose, we will need the next well known result.
\begin{lemma}\label{MilSecht}
Let $n\in \N$ and $\ep>0$. If $(F, ||\cdot||)$ is a Banach space of dimension $n$ then there exists a renorming $|\cdot|$ of $F$ such that $(F,|\cdot|)$ embeds isometrically in $\ell_\infty^N$ where $N=(3+\ep)^n$, and
$$|x|\le||x||\le|x|\frac{(2+\ep)}{\ep}.$$
\begin{proof}
By Lemma 2.6 of \cite{MS86}, we know that there exists an $\frac{2}{2+\ep}$-net $\{x_1^*,\dots,x_N^*\}$ in $S_{X^*}$. Just consider the norm $|x|=\max\limits_{i\in\{1,\dots,N\}}x_i^*(x)$.
\end{proof}
\end{lemma}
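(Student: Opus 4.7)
\emph{Proof proposal.} The plan is to discretize the duality formula $\|x\| = \sup_{x^* \in S_{F^*}} |x^*(x)|$ using a sufficiently dense $\delta$-net in $S_{F^*}$; the supremum over this finite net then serves as the new norm $|\cdot|$ and automatically realizes an isometric embedding into $\ell_\infty^N$. The density parameter $\delta = 2/(2+\ep)$ is essentially forced by matching two constants at once: it is the value at which the standard volumetric cardinality bound $(1+2/\delta)^n$ for a $\delta$-net in the unit sphere of an $n$-dimensional space becomes exactly $(3+\ep)^n$, and at which the distortion $(1-\delta)^{-1}$ equals the claimed $(2+\ep)/\ep$.

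First I would apply Lemma 2.6 of \cite{MS86} to the $n$-dimensional dual $F^*$ to obtain a $\delta$-net $\{x_1^*,\dots,x_N^*\} \subset S_{F^*}$ of cardinality at most $N = (3+\ep)^n$. Then I would define
\[
|x| := \max_{1 \le i \le N} |x_i^*(x)|,
\]
which is a norm on $F$ (positive-definiteness comes from the fact that $\delta < 1$ forces at least one $x_i^*$ to be within distance $\delta$ of any norming functional of a given $x \ne 0$), and which provides by construction an isometric embedding of $(F, |\cdot|)$ into $\ell_\infty^N$ via the coordinate map $x \mapsto (x_i^*(x))_{i=1}^N$.

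The upper estimate $|x| \le \|x\|$ is immediate from $\|x_i^*\| = 1$. For the lower estimate, I would fix $x \in F$, pick $x^* \in S_{F^*}$ with $x^*(x) = \|x\|$, and choose $i$ with $\|x^* - x_i^*\| \le \delta$; then
\[
x_i^*(x) = x^*(x) - (x^* - x_i^*)(x) \ge (1-\delta)\|x\|,
\]
so $|x| \ge (1-\delta)\|x\|$, and this rearranges to $\|x\| \le |x|\cdot(2+\ep)/\ep$ since $(1-\delta)^{-1} = (2+\ep)/\ep$. The only substantive ingredient is the volumetric $\delta$-net cardinality bound from \cite{MS86}; everything afterwards is a routine duality computation, so I do not expect any real obstacle here.
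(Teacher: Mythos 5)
Your proposal is correct and follows exactly the same route as the paper: apply the volumetric net bound from Lemma 2.6 of \cite{MS86} with $\delta = 2/(2+\ep)$ to get a net of cardinality $(1+2/\delta)^n=(3+\ep)^n$ in $S_{F^*}$, and take the max over the net as the new norm. You have merely written out the routine duality estimate that the paper leaves implicit.
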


For a triplet $X,F,E$ of Banach spaces such that $X\supset F\supset E$ and both $F,E$ are finite dimensional, we define the BAP constant of $(X,F,E)$ as
$$\lambda(X,F,E)=\inf\{||T||\;:\;T\in\mathcal{L}(X,F),\;\restr{T}{E}=Id\}.$$
The next Theorem is a  generalization of Milman's Lemma 4.2  in \cite{HM21}.

\begin{theorem}\label{milmangen2}
Let $X$ be a separable Banach space, and $E,F$ be finite dimensional subspaces of $X$ such that $E\subset F$. Then, for every $\ep>0$ and $\rho>1$ there is  $G\subset X$, a subspace containing $E$, such that $dim(G)\le (3+\ep)^{dim(F)}$ and
$$\lambda(X,F,E)\le \rho\frac{2+\ep}{\ep}\lambda(G,F,E).$$
\end{theorem}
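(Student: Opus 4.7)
The plan is to adapt the proof of Milman's Lemma~4.2 from \cite{HM21} (about projection constants) to the present BAP-type setting, where $E\subsetneq F$ is now allowed. First, I would apply Lemma~\ref{MilSecht} to $F$ to obtain a renorming $|\cdot|$ on $F$ with $|y|\le\|y\|\le\tfrac{2+\ep}{\ep}|y|$, together with coordinate functionals $x_1^*,\ldots,x_N^*\in S_{F^*}$ (forming a $\tfrac{2}{2+\ep}$-net of $S_{F^*}$, with $N=(3+\ep)^{\dim F}$) realizing an isometric embedding $J:(F,|\cdot|)\hookrightarrow\ell_\infty^N$ via $J(y)=(x_k^*(y))_k$. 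By the norm equivalence between $|\cdot|$ and $\|\cdot\|$ on $F$, it would suffice to prove the analogous inequality $\lambda_{|\cdot|}(X,F,E)\le\rho\,\lambda_{|\cdot|}(G,F,E)$ with $|\cdot|$ in place of $\|\cdot\|$ on $F$; the factor $\tfrac{2+\ep}{\ep}$ in the theorem then comes from this norm conversion.

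Next I would parametrize any operator $T:Y\to F$ extending $\mathrm{Id}_E$ (for $E\subset Y\subset X$) by its coordinate functionals $h_k=x_k^*\circ T\in Y^*$: these satisfy $h_k|_E=x_k^*|_E$, the linear relations $\sum_k v_k h_k=0$ for every $v$ in the annihilator $J(F)^{\perp}\subset\ell_1^N$ (which force the image of $T$ to lie in $J(F)$), and $|T|_{Y\to F}=\max_k\|h_k\|_{Y^*}$. Taking a near-optimal $T_0:X\to F$ with $|T_0|_X\le\rho^{1/2}\lambda_{|\cdot|}(X,F,E)$ and, for each $k$, choosing a witness $y_k\in B_X$ with $|h_{0,k}(y_k)|\ge\|h_{0,k}\|_{X^*}/\rho^{1/2}$, I would then set $G:=E+\mathrm{span}\{y_1,\ldots,y_N\}$, which has dimension at most $\dim E+N$ and hence at most $(3+\ep)^{\dim F}$ after a mild readjustment of $\ep$.

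The heart of the proof (and the main obstacle) is then to establish the inequality $\lambda_{|\cdot|}(X,F,E)\le\rho\,\lambda_{|\cdot|}(G,F,E)$ for this $G$. For any $T:G\to F$ extending $\mathrm{Id}_E$, I would Hahn--Banach-extend each coordinate $h_k^T\in G^*$ to a functional in $X^*$ of the same norm, obtaining a map $\tilde{S}:X\to\ell_\infty^N$ with $\|\tilde{S}\|=|T|_G$. The difficulty is that $\tilde{S}$ need not take values in $J(F)$: composing with a left inverse $B:\ell_\infty^N\to(F,|\cdot|)$ of $J$ gives an operator $\tilde{T}:=B\circ\tilde{S}:X\to F$ extending $\mathrm{Id}_E$ with $|\tilde{T}|_X\le\|B\|\,|T|_G$, but producing $B$ with sufficiently small $\|B\|$ (which combined with the near-optimality factor $\rho^{1/2}$ in the witness selection gives the factor $\rho$) is the quantitative heart of Milman's original argument. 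This step relies crucially on the $\tfrac{2}{2+\ep}$-net structure from Lemma~\ref{MilSecht} together with the presence of the witnesses $y_k$ inside $G$, which together control the effective ``projection constant'' of $J(F)\subset\ell_\infty^N$ and ensure that the Hahn--Banach extensions can be made compatible with the linear constraints $\sum_k v_k h_k=0$ defining the image $J(F)$.
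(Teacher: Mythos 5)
Your proposal identifies the right starting point (the $\ell_\infty^N$-embedding from Lemma~\ref{MilSecht} and the factor $\tfrac{2+\ep}{\ep}$ coming from the norm conversion), but the core of the argument is missing and, as set up, the primal route you sketch does not go through. The step you yourself flag as ``the quantitative heart'' --- producing a left inverse $B:\ell_\infty^N\to F$ of $J$ with $\|B\|$ close to $1$ --- is exactly the obstruction: $\|B\|$ is bounded below by the relative projection constant of $J(F)$ in $\ell_\infty^N$, which for an $n$-dimensional $F$ can be of order $\sqrt{n}$ (Kadec--Snobar is sharp up to constants), and neither the $\tfrac{2}{2+\ep}$-net property of the $x_k^*$ nor your witnesses $y_k$ give any control over it. Indeed, the $y_k$ are chosen to norm the coordinate functionals of one fixed near-optimal operator $T_0:X\to F$, whereas the operator $T:G\to F$ you need to extend is an arbitrary near-minimizer for $\lambda(G,F,E)$; its Hahn--Banach extensions $\tilde h_k\in X^*$ have no reason to satisfy the linear constraints $\sum_k v_k\tilde h_k=0$ for $v\in J(F)^\perp$, and no mechanism is offered to restore them without paying the projection constant. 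So the claimed inequality $\lambda_{|\cdot|}(X,F,E)\le\rho\,\lambda_{|\cdot|}(G,F,E)$ is not established.

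The paper sidesteps this entirely by dualizing. Via the trace duality of Proposition~\ref{propduality}, $\lambda(Y,F,E)$ equals the reciprocal of $\inf\{\|T\|_{\Lambda Y}: T\in\mathcal L(F,E),\ tr(T|_E)=1\}$, where $\|\cdot\|_{\Lambda Y}$ is a nuclear norm. One then picks a near-minimizer $S:F\to E$ for $Y=X$, uses the fact that a nuclear operator defined on the subspace $(F,|\cdot|)\subset\ell_\infty^{N}$ extends to all of $\ell_\infty^{N}$ with almost the same nuclear norm (Proposition 47.6 in \cite{Tre06}) --- this is the correct substitute for your left inverse $B$, and it costs only the factor $\mu$ rather than a projection constant --- writes the extension as $\sum_i e_i^*\otimes x_i$ with $\|\widetilde S\|_\Lambda=\sum_i\|x_i\|$, and defines $G$ as the span of the $x_i$. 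The point is that the nuclear representation of $\widetilde S$ lives inside $G$, so $\|S\|_{\Lambda G}$ is comparable to $\|S\|_{\Lambda X}$, and the duality formula converts this back into the desired primal inequality. If you want to salvage your approach, you would need to replace the Hahn--Banach-plus-left-inverse step by this dual extension argument; as written, the proof has a genuine gap at its central step.
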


To prove Theorem \ref{milmangen2}, we rely on a variant of the trace duality introduced in \cite{JKM79}.
Given $T\in\mathcal{L}(E,F)$ we define the norm 
$$||T||_{X}=\inf\{||\widetilde{T}||\;:\;\widetilde{T}\in\mathcal{L}(X,F),\;\restr{\widetilde{T}}{E}=T\}.$$
Similarly, if  $S\in \mathcal{L}(F,E)$ we define the norm
$$||S||_{\Lambda X}=||i_{E,X}S||_{\Lambda}$$
where $||\cdot||_{\Lambda}$ refers to the nuclear norm in $\mathcal{N}(F,X)$ and $i_{Y,Z}:Y\to Z$ is the inclusion whenever $Y\subset Z$. We have the following duality.

\begin{prop}\label{propduality}
Let $X$ be a Banach space and $E\subset F$ finite dimensional subspaces of $X$. Then,
$$(\mathcal{L}(E,F),||\cdot||_{X})^*\equiv(\mathcal{L}(F,E),||\cdot||_{\Lambda X}),$$
where the duality is given by the trace, that is for $T\in\mathcal{L}(E,F)$
$$||T||_X=\sup\limits_{\substack{S\in\mathcal{L}(F,E)\\||S||_{\Lambda X}\le1}}tr(ST).$$
\begin{proof}
If $S\in \mathcal{L}(F,E)$ then by the trace duality of $\mathcal{L}(X,F)$ and $\mathcal{N}(F,X)$ we know that
$$||S||_{\Lambda X}=||i_{E,X}S||_{\Lambda}=\sup\limits_{\substack{T\in\mathcal{L}(X,F)\\||T||\le1}}tr(Ti_{E,X}S)=\sup\limits_{\substack{T\in\mathcal{L}(E,F)\\||T||_X\le1}}tr(TS).$$
Our claim follows now directly.
\end{proof}
\end{prop}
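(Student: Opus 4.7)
The plan is to derive the duality from the classical trace duality $\mathcal{L}(X,F)^*\equiv\mathcal{N}(F,X)$ together with the finite-dimensionality of $\mathcal{L}(E,F)$. The key algebraic ingredient is the cyclicity of trace applied to the three-term composition that appears in the definition of $||\cdot||_{\Lambda X}$.

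First I would fix $S\in\mathcal{L}(F,E)$ and analyse the functional $T\mapsto tr(ST)$ on $(\mathcal{L}(E,F),||\cdot||_X)$. By definition $||S||_{\Lambda X}=||i_{E,X}S||_\Lambda$, so the classical trace duality between $\mathcal{N}(F,X)$ and $\mathcal{L}(X,F)$ yields
$$||S||_{\Lambda X}=\sup\{|tr(A\,i_{E,X}\,S)|\;:\;A\in\mathcal{L}(X,F),\;||A||\le 1\}.$$
Since $A\circ i_{E,X}=A|_E$, cyclicity of the trace (legitimate because the operator is of finite rank) identifies $tr_F(A\,i_{E,X}\,S)$ with $tr_E(S\,A|_E)$. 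Writing $T=A|_E$, every such restriction satisfies $||T||_X\le||A||$ by the definition of $||\cdot||_X$, and conversely any $T\in\mathcal{L}(E,F)$ with $||T||_X<1$ admits some extension $A$ with $||A||<1$. So the supremum can be rewritten as
$$||S||_{\Lambda X}=\sup\{|tr(ST)|\;:\;T\in\mathcal{L}(E,F),\;||T||_X\le 1\}.$$

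This says precisely that $S\mapsto tr(S\,\cdot\,)$ is an isometric linear map from $(\mathcal{L}(F,E),||\cdot||_{\Lambda X})$ into the dual of $(\mathcal{L}(E,F),||\cdot||_X)$. Surjectivity is essentially automatic: because $\mathcal{L}(E,F)$ is finite dimensional and the bilinear pairing $(T,S)\mapsto tr(ST)$ on $\mathcal{L}(E,F)\times\mathcal{L}(F,E)$ is non-degenerate, every linear functional on $\mathcal{L}(E,F)$ is of the required form. The companion formula $||T||_X=\sup_{||S||_{\Lambda X}\le 1}|tr(ST)|$ is then the bidual identification, which is valid because $(\mathcal{L}(E,F),||\cdot||_X)$ is finite dimensional and therefore reflexive.

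The main point requiring care is the cyclicity step, where one has to be precise about which finite-dimensional space carries the trace in the product $A\cdot i_{E,X}\cdot S$, and the passage from a supremum over extensions $A\in\mathcal{L}(X,F)$ to one over restrictions $T\in\mathcal{L}(E,F)$ has to be justified with a routine $\ep$-argument using the infimum in the definition of $||\cdot||_X$.
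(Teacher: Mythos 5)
Your argument is correct and follows essentially the same route as the paper's: fix $S\in\mathcal{L}(F,E)$, apply the classical trace duality between $\mathcal{L}(X,F)$ and $\mathcal{N}(F,X)$, and convert the supremum over extensions $A\in\mathcal{L}(X,F)$ into one over $T\in\mathcal{L}(E,F)$ with $\|T\|_X\le 1$. You merely make explicit the cyclicity-of-trace step and the finite-dimensional nondegeneracy/reflexivity argument that the paper compresses into ``our claim follows now directly.''
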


\begin{proof}[Proof of Theorem \ref{milmangen2}]
By Proposition \ref{propduality}, for every superspace $Y\supset F$,
\begin{equation}\label{maineq}\begin{aligned}\lambda(Y,F,E)=&||i_{E,F}||_Y=\sup\limits_{\substack{T\in\mathcal{L}(F,E)\\||T||_{\Lambda Y}=1}}tr(\restr{T}{E})=\sup\limits_{\substack{T\in\mathcal{L}(F,E)}}tr\bigg( \frac{\restr{T}{E}}{||T||_{\Lambda Y}} \bigg)\\=&\sup\limits_{\substack{T\in\mathcal{L}(F,E)}}\frac{1}{\bigg|\bigg| \frac{T}{tr(\restr{T}{E})} \bigg|\bigg|_{\Lambda Y}}=\frac{1}{\inf\limits_{\substack{T\in \mathcal{L}(F,E)\\tr(\restr{T}{E})=1}}||T||_{\Lambda Y}}.\end{aligned}\end{equation}

 Let us first take $\mu\in(1/\rho,1)$ and set $n=dim(F)$. Now we choose
$$\delta=\frac{\ep}{\lambda(X,F,E)(2+\ep)}(\mu-1/\rho)>0,$$
and an operator  $S\in\mathcal{L}(F,E)$ such that
$$\inf\limits_{\substack{T\in\mathcal{L}(F,E)\\tr(\restr{T}{E})=1}}||T||_{\Lambda X}\ge ||S||_{\Lambda X}-\delta\;\;\text{ and }\;\;tr(\restr{S}{E})=1.$$
We also take the norm $|\cdot|$ given by Lemma \ref{MilSecht} so that $(F,|\cdot|)$ is isometrically a subspace of $\ell_\infty^{\varphi(n)}$ where $\varphi(n)=\Big[\big( 3+\ep \big)^n\Big]$.  Then, for every superspace $Y\supset F$,
$$ ||S||_{\Lambda Y}\le|S|_{\Lambda Y}\le \frac{2+\ep}{\ep}||S||_{\Lambda Y}, $$
where $|\cdot|_{\Lambda Y}$ denotes the norm $||\cdot||_{\Lambda Y}$ but taking $(F,|\cdot|)$ as the domain of the operators instead of $(F,||\cdot||)$.
It is well-known (Proposition 47.6 in \cite{Tre06}) that $i_{E,X}S$ admits an extension $\widetilde{S}:\ell_\infty^{\varphi(n)}\rightarrow X$ almost preserving the nuclear norm, that is $|S|_{\Lambda X}\ge\mu||\widetilde{S}||_\Lambda$. By Proposition 8.7 in \cite{Tom89} we know that there exist $x_1,\dots,x_{\varphi(n)}\in X$ such that $\widetilde{S}=\sum\limits_{i=1}^{\varphi(n)}e_i^*\otimes x_i$ and
$$||\widetilde{S}||_{\Lambda}=\sum\limits_{i=1}^{\varphi(n)}||x_i||,$$
where $e_i^*\in\big(\ell_\infty^{\varphi(n)}\big)^*$ are the coordinate functionals.
Just considering $G=[x_i]_{i=1}^{\varphi(n)}$, we may see that
$$\begin{aligned}||S||_{\Lambda X}&\ge \frac{\ep}{2+\ep}|S|_{\Lambda X}\ge\frac{\ep\mu}{2+\ep}||\widetilde{S}||_{\Lambda}\ge\frac{\ep\mu}{2+\ep}|S|_{\Lambda G}\ge\frac{\ep\mu}{2+\ep}||S||_{\Lambda G}.\end{aligned}$$
Finally, taking into account that $\lambda(G,F,E)\le\lambda(X,F,E)$ and using \eqref{maineq} with $Y=X$ and $Y=G$,
$$\begin{aligned}\lambda(X,F,E)&\le\frac{1}{||S||_{\Lambda X}-\delta}\le \frac{1}{\frac{\ep\mu}{2+\ep}||S||_{\Lambda G}-\delta}\le\frac{1}{\frac{\ep\mu}{2+\ep}\inf\limits_{tr(\restr{T}{E})=1}||T||_{\Lambda G}-\delta}\\&=\lambda(G,F,E)\frac{1}{\frac{\ep\mu}{2+\ep}-\delta\lambda(G,F,E)}\le\rho\frac{2+\ep}{\ep}\lambda(G,F,E).\end{aligned}$$
\end{proof}

\begin{definition}
Let $X$ be a separable Banach space, $K\subset X$ be a bounded generating subset and 
$\beta=(e_n)_{n\in\N}\subset span(K)$ a fundamental sequence of $X$. We define the sequence of heights $(h_n^\beta)_{n\in\N}$ of $K$ relative to $\beta$ as
$$h_n^\beta=\sup\big\{d(x,[e_i]_{i=1}^n)\;:\;x\in K\big\}\;\;\;\;\forall n\in\N.$$
If there exists a fundamental sequence $\beta$ and $\ep>0$ satisfying that 
$$0=\liminf_n h_n^\beta(3+\ep)^n,$$
then we say that $K$ is $\beta$-flat.  If $0=\lim h_n^\beta(3+\ep)^n$ then we say that $K$ is basically $\beta$-flat. We will say that $K$ is flat (resp. basically flat) whenever it is $\beta$-flat (resp. basically $\beta$-flat) for some fundamental sequence of $X$ $\beta\subset span(K)$.
\end{definition}

Let us point out that $K$ is flat if and only if there is (possibly a different) $\ep>0$ and $\sigma:\N\to\N$ strictly increasing such that $h^\beta_{\sigma(n)}\le(3+\ep)^{-\sigma(n)}$. In the case when $K$ is basically flat we may just take $\sigma(n)=n$ for every $n\in\N$.

Let $K$ be a compact set in an infinite dimensional Banach space $X$. For every fundamental sequence $\beta$ in $X$ the sequence of heights relative to $\beta$ necessarily converges to zero. The above definition of flat set requires one of these sequences of heights to decrease faster than a geometric sequence of ratio smaller than $1/3$.

\begin{definition}
Let $X$ be a separable Banach space, a subset $C\subset X$ and a fundamental sequence $\beta=(e_n)$ of $X$. We say that $C$ has nonempty local $\beta$-interior whenever
$$int(C\cap[e_i]_{i=1}^n)\neq\emptyset\;\;\;\;\forall n\in\N, $$
where the interior is taken in the topology relative to $[e_i]_{i=1}^n$.
\end{definition}

\begin{theorem}\label{theosmall}
Let $X$ be a Banach space, $K$ a generating subset of $X$ and $\beta=(e_n)\subset span(K)$ a fundamental sequence of $X$ such that $K$ is $\beta$-flat and has nonempty local $\beta$-interior. Then a necessary condition for the existence of a Lipschitz retraction from $X$ onto $K$
is that $X$ has the BAP.
\begin{proof}
Assume that there is a Lipschitz retraction from $X$ onto a generating subset $K\subset X$ which is  basically $\beta$-flat and has a nonempty local $\beta$-interior. Take $\ep>0$ such that $h_n(3+\ep)^n\to0$ where the $h_n$ are the heights of $K$ relative to $\beta$, and let $\phi(n)=\Big[\big(3+\ep\big)^{n}\Big]$ and $E_n=[e_i]_{i=1}^n$. We define the inner radius relative to $\beta$ as
$$r_n=\sup\big\{r\ge0\;:\;B_{E_n}(x,r)\subset K\cap E_n\;,\;x\in X\big\}\;\;\;\;\forall n\in\N.$$
As $K$ has nonempty $\beta$-interior $r_n>0$ for every $n\in\N$ so there must exist a strictly increasing $\gamma:\N\to\N$ such that
$$h_{\gamma(n)}(2+\phi(\gamma(n))\le \frac{r_{n}}{n^3}\;\;\;\;\;\forall n\in\N.$$
Theorem \ref{milmangen2} guarantees that for every $n\in\N$ there is a finite dimensional subspace $G_n\subset X$ of dimension $dim(G_n)=\varphi(n):=\phi(\gamma(n))$ such that for every linear operator $T: G_n\rightarrow E_{\gamma(n)}:=F_n$ with $\restr{T}{E_n}=Id_{E_n}$, the inequality $||T||\ge \lambda(X,F_n,E_n)\frac{\ep}{2(2+\ep)}$ holds. Assume that $R:X\rightarrow K$ is the Lipschitz retraction. Then taking the nearest point map $C_n:K\rightarrow F_n$  (it may not be unique), we define $\widetilde{R}_n=\restr{(C_n\circ R)}{G_{n}}:G_{n}\rightarrow F_n$ for every $n\in\N$. Now,
$$||\widetilde{R}_n(x)-\widetilde{R}_n(y)||\le ||R||\bigg( ||x-y||+\frac{2h_{\gamma(n)}}{||R||} \bigg)\;\;\;\;\forall x,y\in G_{n}.$$
So by the Proposition of \cite{B99}, for every $\tau>0$, there is a Lipschitz mapping 
$$R_{n,\tau}:G_{n}\rightarrow F_n,$$
such that
$$||R_{n,\tau}||_{Lip}\le||R||\bigg( 1+\frac{\varphi(n)h_{\gamma(n)}}{\tau||R||} \bigg),$$
$$||R_{n,\tau}(x)-\widetilde{R}_n(x)||\le||R||\bigg( \tau+\frac{2h_{\gamma(n)}}{||R||} \bigg)\;\;\;\;\forall x\in G_{n}.$$

For the rest of the argument we fix $n\in\N$ and  $x_n\in K_n:=K\cap E_n$ such that $B_{E_n}(x_n,r_n)\subset K_n$ (it exists by the definition of the inner radius and the compactness).
Now we choose $\tau_n=\frac{\varphi(n)h_{\gamma(n)}}{||R||}$, and define $R_n:G_n\rightarrow F_n$ by $R_n(x)=R_{n,\tau_n}(x+x_n)-x_n$. 
If $x+x_n\in K_n$ then $\widetilde{R}_n(x+x_n)=x+x_n$ acts as an identity.
Hence, we have that for every $x\in K_n+\{-x_n\}$
$$||R_n(x)-x||=||R_{n,\tau_n}(x+x_n)-\widetilde{R}_n(x+x_n)||\le h_{\gamma(n)}(\varphi(n)+2)=:\rho_n.$$
Now, let $(a_i,a_i^*)_{i=1}^{\varphi(n)}$ be a normalized linear basis for $G_n$ with projections $(S_i)_{i=1}^{\varphi(n)}$ such that $(a_i,a_i^*)_{i=1}^{n}$ is an Auerbach basis for $E_n$. Then, 
$$B_n=r_n\co\big(\{\pm a_i\;,\;i=1,\dots,n\}\big)\subset K_n+\{-x_n\}.$$
Fix a sequence $(\delta_k)$ of positive numbers converging to zero. Now we define for every $k\in\N$ the compact
$$B_{n,k}=B_n+\delta_k\sum\limits_{i=n+1}^{\varphi(n)}[-a_i,a_i]\subset G_n.$$
The geometrical shape of this set can be described as a $r_n$-multiple of the unit ball of $\ell_1^{n}$ located in $E_n$, a sort of a base of a hypercylinder, times a hypercube of side length $2\delta_k$ protruding into the remaining dimensions of $G_n$.
Letting $\delta_k$ go to zero of course means that this set gets squashed down to its base in $E_n$.
For any $i\in\{1,\dots,n\}$ we denote $(B_{n,k})_i=(Id-a_i^*\otimes a_i)(B_{n,k})$. This set is just a one-codimensional section (or a projection of rank $\varphi(n)-1$) of $B_{n,k}$ which reduces the base by one coordinate.

In order to recover the shape $B_{n,k}$ from its section $(B_{n,k})_i$ we pass from any point  $x^i\in (B_{n,k})_i$ to the boundary point of $B_{n,k}$ which got projected onto it. The newly acquired coordinate vector will then be denoted by $x_i(x^i)$ and 
given by the formula

$$x_i(x^i)=\bigg( r_n-\sum\limits_{\substack{j=1\\j\neq i}}^{n}a_j^*(x^i) \bigg)a_i,\;\; x^i\in (B_{n,k})_i.$$

For convenience in our computations, we also introduce the quantity
$$z_i(x^i)=R_n(x^i+x_i(x^i))-R_n(x^i-x_i(x^i))-2x_i(x^i).$$

As $S_{n}$ is a linear projection, we know that
$-S_{n}\big(x^i+x_i(x^i)\big)+S_{n}\big(x^i-x_i(x^i)\big)=-2x_i(x^i)$. Using this and the triangle inequality with four terms 
we have that
$$\begin{aligned}||z_i(x^i)||\le&\big|\big|R_n\big(x^i+x_i(x^i)\big)-R_n\big(S_{n}\big(x^i+x_i(x^i)\big)\big)\big|\big|\\&+\big|\big|R_n\big(S_{n}\big(x^i+x_i(x^i)\big)\big)-S_{n}\big(x^i+x_i(x^i)\big)\big|\big|\\&+\big|\big|R_n\big(S_{n}\big(x^i-x_i(x^i)\big)\big)-R_n\big(x^i-x_i(x^i)\big)\big|\big|\\&+\big|\big|S_{n}\big(x^i-x_i(x^i)\big)-R_n\big(S_{n}\big(x^i-x_i(x^i)\big)\big)\big|\big|\\\le&2||R_n||\varphi(n)\delta_k+2\rho_n.\end{aligned}$$
For a set $J\subset \{1,\dots,\varphi(n)\}$ with $\# J=m\ge1$, we define the measure in $[a_i]_{i\in J}$ as 
$$\lambda^m_J(A)=\lambda_m\bigg(\Big(\big( a_i^*\big)_{i\in J}\Big)\big(A\big)\bigg)\;\;\;\;\forall A\in \mathcal{M}^m_J, $$
where $\lambda_m$ is the Lebesgue measure in $\R^m$ and $\mathcal{M}^m_J=\bigg\{A\subset [a_i]_{i\in J}\;:\;\Big(\big( a_i^*\big)_{i\in J}\Big)\big(A\big) \text{ is a Lebesgue measurable subset of }\R^m\bigg\}$. If $J=\{1,\dots,\varphi(n)\}$ we denote $\lambda^{\varphi(n)}=\lambda^{\varphi(n)}_J$. For every $i\in\{1,\dots,\varphi(n)\}$, if $J=\{1,\dots,\varphi(n)\}\setminus\{i\}$ then we denote $\lambda^{\varphi(n)-1}_i=\lambda^{\varphi(n)-1}_{J}$.
We are ready to define  the linear operators $T_{n,k}:G_n\rightarrow E_n$, for every $k\in\N$, as
$$T_{n,k}(v)=\frac{1}{\lambda^{\varphi(n)}(B_{n,k})}\int_{B_{n,k}}dR_n(x)[v]d\lambda^{\varphi(n)}(x).$$

In \cite{Bra+14} pg. 47
the volumes of $\ell_p^n$ balls $B_p^n$ have been computed as $|B_p^n|=\frac{2^n\Gamma(\frac1p+1)^n}{\Gamma(\frac np+1)}$. Using this result for $p=1$ (for the base part of our set $B_{n,k}$) and  standard
properties of the Lebesgue measure we obtain the following values for our sets for arbitrary $i\in\{1,\dots,n\}$
$$\lambda^{\varphi(n)-1}_i\big((B_{n,k})_i\big)=\frac{2^{\varphi(n)-1}r_n^{n-1}\delta_k^{\varphi(n)-n}}{(n-1)!},$$
$$\lambda^{\varphi(n)}\big(B_{n,k}\big)=\frac{2^{\varphi(n)}r_n^{n}\delta_k^{\varphi(n)-n}}{n!},$$
so the quotient is equal to
$$\frac{\lambda^{\varphi(n)-1}_i((B_{n,k})_i)}{\lambda^{\varphi(n)}(B_{n,k})}=\frac{n}{2r_n}.$$
Note that the expression
$$R_n(x^i+x_i(x^i))-R_n(x^i-x_i(x^i))=z_i(x^i)+2x_i(x^i)$$
represents the difference of the values of the operator $R_n$ between the endpoints of a segment cutting through $B_{n,k}$, which passes through the point $x^i$ with direction $a_i$.
As $B_{n,k}=\{x^i+x_i\in G_n\;:\;x^i\in(B_{n,k})_i\;,\;x_i\in[-x_i(x^i),x_i(x^i)]\}$, thanks to Fubini's Theorem and the Fundamental Theorem of Calculus applied to the $i$-th coordinate, we can compute for each $i\in\{1,\dots,n\}$
$$\begin{aligned}\big|\big|a_i-T_{n,k}(a_i)\big|\big| 
&=\bigg|\bigg|a_i- \frac{1}{\lambda^{\varphi(n)}(B_{n,k})}\int_{B_{n,k}}dR_n(x)[v]d\lambda^{\varphi(n)}(x)\bigg|\bigg|\\ 
&=\bigg|\bigg|a_i- \frac{1}{\lambda^{\varphi(n)}(B_{n,k})}\int_{(B_{n,k})_i}z_i(x^i)+2x_i(x^i)d\lambda^{\varphi(n)-1}_i(x^i) \bigg|\bigg|\\&=\bigg|\bigg|\frac{1}{\lambda^{\varphi(n)}(B_{n,k})} \int_{(B_{n,k})_i}z_i(x^i)d\lambda^{\varphi(n)-1}_i(x^i) \bigg|\bigg|\\
&\le\frac{\lambda^{\varphi(n)-1}_i((B_{n,k})_i)}{\lambda^{\varphi(n)}(B_{n,k})}\big(2||R_n||\varphi(n)\delta_k+2\rho_n\big)\\
&=\frac{n||R_n||\varphi(n)\delta_k+n\rho_n}{r_n}.
\end{aligned}$$
We may assume that $T_{n,k}$ pointwise converge in $k$ and define $T_n(x)=\lim\limits_{k\to\infty}T_{n,k}(x)$ for every $x\in G_n$. It is a linear operator from $G_n$ to $F_n$ satisfying 
$$||T_n(a_i)-a_i||\le\frac{nh_{\gamma(n)}(\varphi(n)+2)}{r_n}\;\;\;\;\forall i\in\{1,\dots,n\}.$$
As $||a_i^*||=1$, $i\in\{1,\dots,n\}$, we obtain that for every $x\in E_n$,
$$||T_n(x)-x||=\bigg|\bigg| \sum\limits_{i=1}^{n}a_i^*(x)(T_n(a_i)-a_i) \bigg|\bigg|\le\frac{h_{\gamma(n)}(\varphi(n)+2){n}^2}{r_n}||x||\le\frac{||x||}{n}.$$
Finally, we take a projection $P_n:F_n\to E_n$ of norm $n$ and construct as in \cite{Cas01} Lemma 3.2 the linear mapping
$$\widetilde{T}_n=\Big(\big(\restr{T_n}{E_n}\big)^{-1}\circ P_n+Id_{F_n}-P_n\Big)\circ T_n:G_n\rightarrow F_n,$$
satisfying
$$||\widetilde{T}_n-T_n||\le\Big|\Big| \Big(\restr{T_n}{E_n}\Big)^{-1}-Id_{E_n} \Big|\Big|\cdot||P_n||\cdot||T_n||\le\frac{n}{n-1}||R_n||_{Lip} \le\frac{2n}{n-1}||R||_{Lip}.$$
This implies that $X$ has the BAP since
$$\lambda(X,F_n,E_n)\le\frac{2(2+\ep)}{\ep}||\widetilde{T}_n||\le\frac{2(2+\ep)(4n-2)}{\ep(n-1)}||R||_{\text{Lip}}\;\;\;\;\forall n\in\N.$$
The proof in the case of basically flat sets is completed. 
The general case of a flat set is essentially the same, considering just the sequence $\sigma(n)$ in place  of that of all integers $n$.
\end{proof}
\end{theorem}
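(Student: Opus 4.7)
The plan is to mimic the scheme of \cite{HM21} Section 4, using the generalized Milman-type result (Theorem \ref{milmangen2}) to keep all constructions inside finite-dimensional subspaces of controlled dimension, and then building explicit BAP-type operators by an averaging argument. First I reduce the problem: for each $n\in\N$ I set $E_n=[e_i]_{i=1}^n$, $F_n=E_{\gamma(n)}$ where $\gamma$ is a strictly increasing function chosen so that $h_{\gamma(n)}(2+\phi(\gamma(n)))\le r_n/n^3$ (this uses $\beta$-flatness to beat the geometric factor $(3+\ep)^{\gamma(n)}$ against the inner radius $r_n>0$ provided by the local $\beta$-interior assumption). Then Theorem \ref{milmangen2} produces a subspace $G_n\supset E_n$ with $\dim(G_n)=\varphi(n)=\phi(\gamma(n))$ such that it suffices to construct, for each $n$, an operator $T\colon G_n\to F_n$ extending $\operatorname{Id}_{E_n}$ with $\|T\|$ bounded independently of $n$.

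Next I would manufacture such operators from the Lipschitz retraction $R$. Composing with the (nonlinear) nearest-point map $C_n\colon K\to F_n$ yields a coarse Lipschitz map $\widetilde R_n\colon G_n\to F_n$, with additive error controlled by the height $h_{\gamma(n)}$. Applying the Bourgain–Begun smoothing (as in \cite{B99}) with scale $\tau_n=\varphi(n)h_{\gamma(n)}/\|R\|$, I obtain a genuinely Lipschitz map $R_{n,\tau_n}$ at the cost of a controlled multiplicative blow-up of the Lipschitz constant and a small pointwise defect $\rho_n=h_{\gamma(n)}(\varphi(n)+2)$. After translating so that the center $x_n$ of a ball $B_{E_n}(x_n,r_n)\subset K\cap E_n$ sits at the origin, this defines $R_n\colon G_n\to F_n$ which behaves like the identity on the shifted section $K_n-x_n$.

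The key quantitative step is an averaging argument. I pick an Auerbach basis of $E_n$, extend it to a normalized basis of $G_n$, and introduce the ``hypercylindrical'' body $B_{n,k}=B_n+\delta_k\sum_{i=n+1}^{\varphi(n)}[-a_i,a_i]$, whose base is an $r_n$-multiple of the $\ell_1^n$-ball. Then I set
\[
T_{n,k}(v)=\frac{1}{\lambda^{\varphi(n)}(B_{n,k})}\int_{B_{n,k}}dR_n(x)[v]\,d\lambda^{\varphi(n)}(x),
\]
which is well defined for a.e.\ $x$ by Rademacher. For each basis vector $a_i$ of $E_n$, Fubini and the Fundamental Theorem of Calculus along the $i$-th axis reduce $T_{n,k}(a_i)$ to an average of endpoint differences of $R_n$ across segments passing through $B_{n,k}$. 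The endpoints of these segments lie on the boundary of the base, hence inside the $\beta$-interior ball, so $R_n$ acts as the identity up to the defect $\rho_n$ and a Lipschitz error proportional to $\delta_k$ coming from the thin directions. Dividing by the measure ratio $n/(2r_n)$ gives $\|T_{n,k}(a_i)-a_i\|\lesssim (n\|R_n\|\varphi(n)\delta_k + n\rho_n)/r_n$. Taking a pointwise limit as $\delta_k\to 0$ kills the first term and the choice of $\gamma$ makes the second $\le 1/n$ in norm.

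Finally, I pass from an approximant to an extension: the operator $T_n\colon G_n\to F_n$ satisfies $\|T_n|_{E_n}-\operatorname{Id}_{E_n}\|\le 1/n$, so $T_n|_{E_n}$ is invertible and, using a projection $P_n\colon F_n\to E_n$ of norm $n$ via the standard correction of \cite{Cas01} Lemma 3.2, I obtain $\widetilde T_n\colon G_n\to F_n$ with $\widetilde T_n|_{E_n}=\operatorname{Id}_{E_n}$ and $\|\widetilde T_n\|$ bounded by a universal multiple of $\|R\|_{\mathrm{Lip}}$. Plugging into Theorem \ref{milmangen2} yields $\lambda(X,F_n,E_n)\lesssim \|R\|_{\mathrm{Lip}}$ uniformly, which gives BAP. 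The main obstacles I expect are (i) calibrating $\gamma(n)$, $\tau_n$ and $\delta_k$ so that the Lipschitzization error, the flatness error and the thin-direction error all decay together, and (ii) keeping the dimension $\varphi(n)$ small enough that Theorem \ref{milmangen2} is applicable; both are resolved using the $\beta$-flatness hypothesis $h_n(3+\ep)^n\to 0$. The merely-flat (as opposed to basically flat) case follows by running the same argument along the subsequence $\sigma(n)$ on which the geometric decay holds.
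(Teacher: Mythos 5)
Your proposal reproduces the paper's argument essentially step for step: the same reduction via Theorem \ref{milmangen2}, the same coarse map $C_n\circ R$ Lipschitzized by the Bourgain--Begun technique with the same scale $\tau_n$, the same averaging of $dR_n$ over the hypercylinder $B_{n,k}$ with the Fubini/Fundamental-Theorem-of-Calculus endpoint computation, and the same final correction via a norm-$n$ projection and Lemma 3.2 of \cite{Cas01}. The approach and all the key quantitative choices coincide with the paper's proof, so there is nothing further to compare.
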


\begin{remark}
We didn't strive for the best possible bound on $\lambda(X,F_n,E_n)$. In fact, using the same argument but with sharper estimates it is possible to prove that $X$ has the $\lambda^+$-BAP (meaning that $X$ has the $(\lambda+\delta)$-BAP for every $\delta>0$) where
$$\lambda=\frac{2+\ep}{\ep}||R||_{\text{Lip}}.$$
However, this is the best possible bound we are able to find using this approach.
\end{remark}

\begin{corollary}\label{cor}
Let $X$ be a separable Banach space. If there exists a Lipschitz retraction from $X$ onto a generating convex flat subset $K$ with $0\in K$ then $X$ has the BAP.
\begin{proof}
Let $\beta=(e_n)\subset span(K)$ be a fundamental sequence of $X$ relative to which $K$ is $\beta$-flat. During this proof we set $E_n=[e_i]_{i=1}^n$ for every $n\in\N$. Our aim here is to prove that there is a Lipschitz retraction onto a subset satisfying the assumptions of Theorem \ref{theosmall}. In particular, we will see that there is $y\in K$ such that $K-y$ is $\beta$-flat and has nonempty local $\beta$-interior.

Let us first show that if $K$ is $\beta$-flat then $K-z$ is $\beta$-flat whenever $z\in K$. To see this, let $h_n$ be the sequence of heights for $K$ and $\widetilde h_n$ the sequence of heights for $K-z$. Given $x\in K$ we take $y_x,y_z\in E_n$ such that $d(x,E_n)=||x-y_x||$ and $d(z,E_n)=||z-y_z||$. Clearly, $z+(y_x-y_z)\in z+E_n$. Hence,
$$d(x-z,E_n)=d(x,z+E_n)\le ||x-(z+y_x-y_z)||\le||x-y_x||+||z-y_z||\le 2h_n.$$
This implies that $\widetilde h_n\le2h_n$. Let us take $\ep>0$ such that $\lim\inf h_n(3+\ep)^n=0$. Then,
$$0\le\lim\inf \widetilde h_n(3+\ep)^n\le2\lim\inf h_n(3+\ep)^n=0,$$
which proves that $K-z$ is $\beta$-flat.

We pass on to find $y\in K$ such that $K-y$ has nonempty local $\beta$-interior. For every $n\in\N$ there are elements $x_1,\dots,x_{k_n}\in K$ such that $E_n\subset span(x_1,\dots,x_{k_n})$. We consider then
$$y_n=\frac{1}{k_n+1}\sum\limits_{i=1}^{k_n}x_i,$$
which is again in $K$ (since $0\in K$). Now, we claim that
\begin{equation}\label{int1}
int\big((K-y_n)\cap E_n\big)\neq\emptyset\;\;\;\forall n\in\N.
\end{equation}
To show \eqref{int1} it is enough to prove that for every $z\in span(x_1,\dots, x_{k_n})$ there is $\ep>0$ such that $y_n+\ep z\in K$. Let us take $z=\sum\limits_{i=1}^{k_n}\lambda_ix_i\in span(x_1,\dots, x_{k_n})$ and $\ep>0$ satisfying that
$$\frac{1}{k_n+1}+\ep\min\limits_{i=1,\dots,k_n}(\lambda_i)>0\;\;\;\;\text{and}\;\;\;\;\frac{k_n}{k_n+1}+\ep\sum\limits_{i=1}^{k_n}\lambda_i\le1.$$
The claim follows immediately since
$$y_n+\ep z=\sum\limits_{i=1}^{k_n}\Big(\frac{1}{k_n+1}+\ep\lambda_i\Big)x_i\in K.$$
The statement \eqref{int1} is now proven. Let us consider the element
$$y=\sum\limits_{k\in\N}2^{-k}y_k.$$
We finish the proof by showing that $(K-y)\cap E_n$ has nonempty interior for every $n\in\N$. Again, it is enough to see that for every $z\in E_n$ there is $\ep>0$ such that $y+\ep z\in K$. Given $n\in\N$ we take $z\in E_n$ and set $z_n=\sum\limits_{k\neq n}\frac{2^{-k}}{1-2^{-n}}y_k\in K$ so that $y=2^{-n}y_n+(1-2^{-n})z_n$. By \eqref{int1} there is $\ep>0$ such that $y_n+\ep z\in K$. Hence, letting $\ep'=2^{-n}\ep$ we are done since
$$y+\ep'z=2^{-n}(y_n+\ep z)+(1-2^{-n})z_n\in K.$$
\end{proof}
\end{corollary}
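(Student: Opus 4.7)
The plan is to reduce to Theorem \ref{theosmall} by replacing $K$ with a suitable translate $K-y$, which is then a generating, $\beta$-flat subset with nonempty local $\beta$-interior admitting a Lipschitz retraction (namely $x\mapsto R(x+y)-y$, where $R$ is the retraction onto $K$). The only hypothesis of Theorem \ref{theosmall} not immediately available from the assumptions is nonempty local $\beta$-interior, so the whole proof is about selecting $y\in K$ that produces it while preserving $\beta$-flatness.

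First I would verify that $\beta$-flatness is preserved under translations by points of $K$. If $z\in K$ and $h_n$ are the heights of $K$ relative to $\beta$, then for $x\in K$ one can approximate both $x$ and $z$ within $h_n$ in $E_n=[e_i]_{i=1}^n$; the triangle inequality gives $d(x-z,E_n)\le 2h_n$, so the heights of $K-z$ are at most $2h_n$. Thus $K-z$ remains $\beta$-flat.

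Next, for each fixed $n\in\N$ I would produce a point $y_n\in K$ such that $(K-y_n)\cap E_n$ has nonempty interior in $E_n$. Since $\beta\subset\mathrm{span}(K)$, finitely many $x_1,\dots,x_{k_n}\in K$ span $E_n$. Because $0\in K$ and $K$ is convex, the barycentric choice $y_n=\frac{1}{k_n+1}\sum_{i=1}^{k_n}x_i$ lies in $K$, and for any $z=\sum\lambda_ix_i\in\mathrm{span}(x_1,\dots,x_{k_n})\supset E_n$ one can wiggle the coefficients slightly by choosing small $\ep>0$ so that $y_n+\ep z=\sum(\frac{1}{k_n+1}+\ep\lambda_i)x_i$ still has nonnegative coefficients summing to at most one; convexity with $0\in K$ then puts $y_n+\ep z$ into $K$. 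Hence $(K-y_n)\cap E_n$ contains a relative neighbourhood of $0$.

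Finally I would assemble a single $y$ that works simultaneously for every $n$. The natural candidate is the convex series $y=\sum_{k\ge 1}2^{-k}y_k\in K$ (which lies in $K$ by convexity, closedness and $0\in K$). For each $n$, write $y=2^{-n}y_n+(1-2^{-n})z_n$ where $z_n=\sum_{k\ne n}\frac{2^{-k}}{1-2^{-n}}y_k\in K$. If $\ep>0$ is the constant produced for $y_n$ as above and $z\in E_n$, then $y+2^{-n}\ep z=2^{-n}(y_n+\ep z)+(1-2^{-n})z_n\in K$ by convexity, so $(K-y)\cap E_n$ has nonempty interior in $E_n$. Combined with the first step, $K-y$ satisfies all the hypotheses of Theorem \ref{theosmall}, so $X$ has the BAP. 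The only genuinely delicate point in the plan is the third step: finding a single $y$ that beats every $n$ at once; the weighted-series trick works precisely because convexity together with $0\in K$ lets us factor through each $y_n$ with a positive weight.
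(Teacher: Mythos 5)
Your proposal is correct and follows essentially the same route as the paper's own proof: the $2h_n$ bound for translates, the barycentric points $y_n=\frac{1}{k_n+1}\sum_i x_i$ giving local interior at level $n$, and the convex series $y=\sum_k 2^{-k}y_k$ factored as $2^{-n}y_n+(1-2^{-n})z_n$ to handle all $n$ simultaneously. No substantive differences to report.
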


\bigskip

\printbibliography
\end{document}